\newtheorem{thm}{Theorem}
\newtheorem{prop}[thm]{Proposition}
\newtheorem{cor}[thm]{Corollary}
\newtheorem{lem}[thm]{Lemma}
\newtheorem{defi}[thm]{Definition}
\newtheorem{remark}[thm]{Remark}
\newtheorem{example}[thm]{Example}
\newtheorem{pb}[thm]{Problem}
\newenvironment{problem}{\begin{pb}\rm}{\end{pb}}
\newcommand{\real}{{\mathbb R}}
\newcommand{\com}{{\mathbb C}}
\newcommand{\un}{{\mathds {1}}}
\newcommand{\B}{{\mathcal B}}
\newcommand{\F}{{\mathcal F}}
\newcommand{\M}{{\mathcal M}}
\newcommand{\TT}{{\mathcal T}}
\renewcommand{\a}{\alpha}
\renewcommand{\b}{\beta}
\newcommand{\g}{\gamma}
\newcommand{\Ga}{\Gamma}
\renewcommand{\d}{\delta}
\renewcommand{\t}{\theta}
\newcommand{\e}{\varepsilon}
\newcommand{\f}{\varphi}
\renewcommand{\l}{\lambda}
\renewcommand{\O}{\Omega}
\renewcommand{\o}{\omega}
\newcommand{\s}{\sigma}
\newcommand{\Si}{\Sigma}
\newcommand{\8}{\infty}
\newcommand{\el}{\ell}
\newcommand{\wt}{\widetilde}
\newcommand{\wh}{\widehat}
\newcommand{\n}{\noindent}
\newcommand{\les}{\lesssim}
\newcommand{\be}{\begin{align*}}
\newcommand{\ee}{\end{align*}}
\newcommand{\beq}{\begin{equation}}
\newcommand{\eeq}{\end{equation}}
\newcommand{\beqn}{\begin{equation*}}
\newcommand{\eeqn}{\end{equation*}}
\newcommand{\cqd}{\hfill$\Box$}
\begin{document}

\title[Functional calculus and maximal  inequalities]{$H^\8$ functional calculus and maximal  inequalities for semigroups of contractions on  vector-valued $L_p$-spaces}

\thanks{{\it 2000 Mathematics Subject Classification:} Primary: 47A35 · 47A60. Secondary: 46B20, 42B25}
\thanks{{\it Key words:} Analytic semigroups of positive contractions on $L_p$, vector-valued maximal inequalities, $H^\8$ functional calculus}

\author[Q. Xu]{Quanhua Xu}
\address{School of Mathematics and Statistics, Wuhan University, Wuhan 430072, China and Laboratoire de Math{\'e}matiques, Universit{\'e} de Franche-Comt{\'e},
25030 Besan\c{c}on Cedex, France}
\email{qxu@univ-fcomte.fr}

\date{}
\maketitle

\begin{abstract}
 Let $\{T_t\}_{t>0}$ be a strongly continuous semigroup of positive contractions on $L_p(X,\mu)$ with $1<p<\8$. Let $E$ be a UMD Banach lattice of measurable functions on another measure space $(\O,\nu)$. For $f\in L_p(X; E)$ define
 $$\M(f)(x, \o)=\sup_{t>0}\frac1t\Big|\int_0^tT_s(f(\cdot,\o))(x)ds\Big|,\quad (x,\o)\in X\times\O.$$
Then the following maximal ergodic inequality holds
 $$\big\|\M(f)\big\|_{L_p(X; E)}\les \big\|f\big\|_{L_p(X; E)},\quad f\in L_p(X; E).$$
If the semigroup $\{T_t\}_{t>0}$ is additionally assumed to be analytic, then $\{T_t\}_{t>0}$ extends to an analytic semigroup on $L_p(X; E)$ and $\M(f)$ in the above inequality can be replaced by the following sectorial maximal function
 $$\TT_\t(f)(x, \o)=\sup_{|{\rm arg}(z)|<\t}\big|T_z(f(\cdot,\o))(x)\big|$$
for some $\t>0$.

Under the latter analyticity assumption and if $E$ is a complex interpolation space between a Hilbert space and a UMD Banach space, then $\{T_t\}_{t>0}$ extends to an analytic semigroup on $L_p(X; E)$ and its negative generator has a bounded $H^\8(\Sigma_\s)$  calculus for some $\s<\pi/2$.
 \end{abstract}

\bigskip


\section{Introduction}


Let $(X, \F, \mu)$ be  a measure space and $\{T_t\}_{t>0}$ a strongly continuous semigroup of contractions on $L_p(X)$ for every $1\le p\le\8$. Consider the ergodic averages of $\{T_t\}_{t>0}$:
 $$ A_t(T)=\frac1t\int_0^tT_udu$$
and the associated maximal operator:
 $$M(T)(f)=\sup_{t>0}|A_t(T)(f)|.$$
The classical Dunford-Schwartz maximal ergodic inequality asserts that the maximal operator $M(T)$ is bounded on $L_p(X)$ for $1<p\le\8$, and  from $L_1(X)$ to $L_{1,\8}(X)$.

\smallskip

Very recently, Charpentier and Del\'eaval \cite{CD} proved the following vector-valued version of Dunford-Schwartz's inequality: For any $1<q<p<\8$ and any finite sequence $\{f_k\}_{k\ge1}$ in $L_p(X)$
 \beq\label{DS}
  \big\|\big(\sum_k(M(T)(f_k))^q\big)^{1/q}\big\|_p\les\big\|\big(\sum_k|f_k|^q\big)^{1/q}\big\|_p.
  \eeq
Here and in the sequel the symbol $\les$ means an inequality up to a constant depending only on the indices $p, q$, the spaces $E$, etc. but never on the functions  in consideration.

They then asked whether \eqref{DS} remains valid for $1<p<q$. We answer this question by the affirmative.  The proof is simply based on the transference principle that allows us to reduce \eqref{DS} to the special case  where $\{T_t\}_{t>0}$  is the translation group of $\real$. In the latter case, \eqref{DS} is exactly Fefferman-Stein's celebrated vector-valued maximal inequality \cite{FS}.
In fact, we will show  more. To state our result we need recall some definitions. An operator $T$ on $L_p(X)$ is called {\it regular} (more precisely, {\it contractively regular}) if
 $$\big\|\sup_k|T(f_k)|\big\|_p\le \big\|\sup_k|f_k|\big\|_p$$
for all finite sequences $\{f_k\}_{k\ge1}$ in $L_p(X)$. Clearly, any positive contraction $T$ is regular. It is well known that, conversely, if $T$ is regular, then there exists a positive contraction $S$ on  $L_p(X)$  such that $|T(f)|\le S(|f|)$
for any $f\in L_p(X)$; moreover, in this case, $T$ can be written as a linear combination of four positive operators (see \cite[Chapter~1]{MN}).

On the other hand,  it is well known (and easy to check) that if $T$ is a contraction on $L_1(X)$ or $L_\8(X)$, then $T$ is regular. Thus by interpolation,  $T$ is regular on $L_p(X)$ if $T$ is a contraction on $L_p(X)$ for all $1\le p\le\8$.

A regular operator $T$  extends to the vector-valued case. Namely,  $T$ extends to a contraction on $L_p(X;\,E)$ for any Banach space $E$, where $L_p(X;\,E)$ stands for the $L_p$-space of strongly measurable functions from $X$ to $E$. For notational simplicity, this extension will be denoted still by $T$.

\smallskip

We will use UMD Banach lattices. We refer to \cite{bu} for UMD spaces and \cite{LT} for Banach lattices.
Recall that any $L_p$-space with $1<p<\8$ is a UMD space. Let $E$ be a Banach lattice of measurable functions on a measure space $(\O, \nu)$. The functions in $L_p(X; E)$ are viewed as functions of two variables $(x, \o)\in X\times\O$.

\smallskip

Let $\{T_t\}_{t>0}$ be a strongly continuous semigroup of regular contractions on $L_p(X)$. So $\{T_t\}_{t>0}$ extends to a semigroup of contractions on $L_p(X;\,E)$ too. Define
 $$\M(f)(x, \o)=\M(T)(f)(x, \o)=M(T)(f(\cdot, \o))(x),\quad (x,\o)\in X\times\O.$$

\begin{thm}\label{thDS}
 Let $1<p<\8$ and $\{T_t\}_{t>0}$ be a strongly continuous semigroup of regular contractions on $L_p(X)$. Then for any UMD Banach lattice $E$
 \beq\label{DS1}
 \big\|\M(f)\big\|_{L_p(X;\,E)}\les \big\|f\big\|_{L_p(X;\,E)},\quad f\in L_p(X;\,E).
 \eeq
 \end{thm}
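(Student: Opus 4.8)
The plan is to reduce inequality \eqref{DS1} to the model case of the translation group on $\real$, where it becomes the Fefferman--Stein maximal inequality in its UMD-Banach-lattice form. The key tool is the Coifman--Weiss transference principle, suitably adapted to the vector-valued setting. First I would recall that a strongly continuous semigroup $\{T_t\}_{t>0}$ of regular contractions on $L_p(X)$ is not, in general, directly transferable, since transference in its cleanest form is stated for (bounded) representations of groups. The standard remedy, going back to Coifman--Weiss and used systematically by Cowling and others, is to dilate: pass from the semigroup to the associated group by considering the ergodic averages $A_t(T)$ and observing that the maximal operator $M(T)$ is dominated, via an averaging/subordination argument, by the maximal operator associated with a measure-preserving flow, or alternatively to invoke directly the Dunford--Schwartz framework which realizes $\{T_t\}$ inside a flow on a larger measure space. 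For the vector-valued statement I would run this argument with $L_p(X;E)$ in place of $L_p(X)$, which is legitimate precisely because each $T_t$ is \emph{regular} and hence extends to a contraction on $L_p(X;E)$ for the Banach lattice $E$, and because the ergodic averages $A_t(T)$ remain regular (being averages of regular contractions).

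The heart of the matter is then the following: after transference, the maximal function $\M(f)$ is controlled, at each point, by the Hardy--Littlewood-type maximal function
\beqn
 \widetilde M(g)(y,\o)=\sup_{t>0}\frac1{2t}\Big|\int_{-t}^t g(y-u,\o)\,du\Big|
\eeqn
acting on functions $g\in L_p(\real;E)$. The required inequality $\|\widetilde M(g)\|_{L_p(\real;E)}\les\|g\|_{L_p(\real;E)}$ is the vector-valued Fefferman--Stein inequality: it is classical that the Hardy--Littlewood maximal operator is bounded on $L_p(\real;E)$ for every UMD Banach lattice $E$ and every $1<p<\8$ (this is essentially the Bourgain / Rubio de Francia circle of results on $\ell^q$-valued and lattice-valued maximal functions; see also García-Cuerva--Rubio de Francia). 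I would cite this as the known input. Combining it with the transference inequality — which transfers $L_p(\real;E)\to L_p(\real;E)$ bounds of convolution-type maximal operators to $L_p(X;E)\to L_p(X;E)$ bounds for the semigroup, with constant independent of everything relevant — yields \eqref{DS1}.

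In more detail, the transference step proceeds as follows. Fix $N>0$; for $f\in L_p(X;E)$ and the flow $\{S_u\}_{u\in\real}$ furnished by the Dunford--Schwartz representation (so that $T_t=P\,S_t\,J$ with $J$ an isometric embedding into $L_p$ of a larger space and $P$ a positive contractive projection, all compatible with tensoring by $E$), one writes, for $0<t\le N$,
\beqn
 A_t(T)(f)(x,\o)=\frac1t\int_0^t (S_u Jf)(x,\o)\,du,
\eeqn
localizes the $u$-integration to a window of length $\sim N$, integrates the resulting pointwise inequality over that window, and applies the one-dimensional maximal bound fibrewise in the $\real$-variable. Letting $N\to\8$ removes the truncation. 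The only place where care is genuinely needed is checking that every operation respects the lattice structure of $E$ and the strong measurability of $E$-valued functions — in particular that the pointwise supremum defining $\M(f)$ is measurable (reduce to a countable dense set of $t$ using strong continuity) and that the dilation/projection operators act as claimed on $L_p(X;E)$ (guaranteed by regularity, as recalled in the excerpt).

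I expect the main obstacle to be purely technical rather than conceptual: making the passage from semigroup to group rigorous in the vector-valued category, i.e. ensuring that the Dunford--Schwartz dilation (or the Akcoglu--Sucheston-type dilation, if one prefers to dilate into positive isometries) can be carried out so that it commutes with tensoring by $E$ and preserves the contractive-regularity needed to extend everything to $L_p(\,\cdot\,;E)$. Once that bookkeeping is in place, the analytic core — the vector-valued Fefferman--Stein inequality — is off the shelf, and the transference estimate is the standard Coifman--Weiss argument applied verbatim with scalars replaced by $E$. A secondary, minor point is the reduction of the general measure space $X$ and of arbitrary $t>0$ to the truncated, $\sigma$-finite situation where the flow picture is available; this is routine and I would dispatch it quickly.
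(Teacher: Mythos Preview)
Your plan is essentially the paper's own proof: dilate the semigroup to a group of regular isometries on a larger $L_p$-space, run the Coifman--Weiss transference trick with the $N\to\infty$ averaging window, and appeal to Bourgain's vector-valued Hardy--Littlewood maximal inequality for UMD lattices as the model estimate on $\real$. The only imprecision is the name of the dilation step: what you call ``the Dunford--Schwartz representation'' or ``Akcoglu--Sucheston-type dilation'' is, in the continuous one-parameter regular-contraction setting on a single $L_p$, precisely Fendler's dilation theorem, and that is exactly what the paper invokes to produce the group $\{S_t\}_{t\in\real}$ together with the regular $P$ and $D$ that tensor up to $L_p(X;E)$.
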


Recall that a strongly continuous semigroup $\{T_t\}_{t>0}$ on a Banach space $E$ is called {\it analytic} if there exists $\theta>0$ such that $\{T_t\}_{t>0}$ extends to a bounded analytic function from the sector $\Sigma_\theta$ to $\B(E)$, where $\Sigma_\theta=\{z\in\com: z\neq0,\; |{\rm arg}(z)|<\theta\}$ and $\B(E)$ denotes the space of all bounded linear operators on $E$.

\smallskip

If the semigroup $\{T_t\}_{t>0}$ in Theorem~\ref{thDS} is further assumed to be analytic, the maximal function on the ergodic averages there can be replaced by the maximal function directly taken on the $T_t$'s. Moreover, we can also estimate the following sectorial maximal function for some $\t>0$
 $$\TT_\t(f)(x, \o)=\sup_{z\in\Sigma_\t}\big|T_z(f(\cdot,\o))(x)\big|,\quad (x,\o)\in X\times\O.$$

\begin{thm}\label{thS}
 Let $1<p<\8$ and $\{T_t\}_{t>0}$ be an analytic semigroup of regular contractions on $L_p(X)$. Let $E$ be a UMD lattice on $(\O, \nu)$. Then there exists $\t>0$ such that $\{T_t\}_{t>0}$ extends to a bounded analytic function  on $\Sigma_\t$ with values in $\B(L_p(X; E))$ and
 \beq\label{max}
 \big\|\TT_\t(f)\big\|_{L_p(X;\, E)}\les \big\|f\big\|_{L_p(X;\, E)},\quad f\in L_p(X;\, E).
 \eeq
 \end{thm}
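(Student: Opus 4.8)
The plan is to combine the transference machinery behind Theorem~\ref{thDS} with the standard way of comparing a maximal function on an analytic semigroup with a maximal function on its ergodic averages. First I would recall that since $\{T_t\}_{t>0}$ consists of regular contractions, it extends to a strongly continuous semigroup of contractions on $L_p(X;E)$ for any Banach space $E$; and since the semigroup is analytic on $L_p(X)$, one checks using the regularity of the resolvents (or of $tAT_t$ via the Cauchy integral formula around a circle inside $\Sigma_\theta$) that $\{T_t\}$ extends to a bounded analytic function on a possibly smaller sector $\Sigma_{\theta'}$ with values in $\B(L_p(X;E))$. The first genuine step, then, is to fix this sector; I expect one loses a definite but harmless amount of opening angle when passing from scalar to $E$-valued, so the $\t$ in the statement will be strictly smaller than the scalar sector's angle, which is exactly why the theorem only asserts existence of some $\t>0$.

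Next I would reduce the sectorial maximal function $\TT_\t(f)$ to a maximal function on ergodic averages plus a controlled error. The classical identity here is
$$
T_z f = \frac{1}{z}\int_0^\infty e^{-u/z}\,A_u(T)\Big(\tfrac{d}{du}\big(u\,(\cdot)\big)\Big)\,du
$$
type formula, or more cleanly: write $T_z$ in terms of $A_t(T)$ by integration by parts, so that $|T_z f(x,\o)| \lesssim \sup_{t>0}|A_t(T)(f(\cdot,\o))(x)| + (\text{a term handled by analyticity})$. The cleanest route is probably the one used by Cowling and by Blunck--Kunstmann: for $z$ in a sufficiently narrow subsector, expand $T_z$ using the analytic-semigroup representation $T_z=\frac{1}{2\pi i}\int_\Gamma e^{z\lambda}R(\lambda,A)\,d\lambda$, deform $\Gamma$, and dominate the kernel pointwise so that $\sup_{z\in\Sigma_\t}|T_z f|$ is controlled by the Hardy--Littlewood-type maximal operator $\M(f)$ of Theorem~\ref{thDS} (possibly after also invoking the lattice maximal inequality in the $\o$-variable, which is available because $E$ is UMD). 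Once $\TT_\t(f)$ is pointwise dominated by a finite combination of operators already controlled by Theorem~\ref{thDS} and by the boundedness of $\{T_z\}$ on $L_p(X;E)$, inequality \eqref{max} follows.

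The main obstacle I anticipate is making the pointwise domination of $\sup_{z\in\Sigma_\t}|T_z f|$ by $\M(f)$ rigorous in the $E$-valued setting: one must keep track of the fact that $A_u(T)$ and the resolvents act only in the $x$-variable, so all the kernel estimates must be uniform in $\o$, and the passage from the integral representation to a genuine pointwise (in $(x,\o)$) bound requires care about measurability and about interchanging $\sup_z$ with the integral. A secondary technical point is choosing $\t$ small enough that the relevant Cauchy integrals converge \emph{and} the analyticity bound on $L_p(X;E)$ holds on that same sector; since analyticity on $L_p(X;E)$ is only guaranteed on some subsector, one simply takes $\t$ to be the minimum of the angle where the kernel estimates work and the angle of $E$-valued analyticity. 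Everything else---strong continuity, density of nice functions, the lattice maximal inequality---is standard once these two points are settled.
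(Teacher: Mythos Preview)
Your outline has a genuine gap at the decisive step. You propose to dominate $\sup_{z\in\Sigma_\t}|T_z f|$ pointwise by $\M(f)$ plus a term ``handled by analyticity'', via integration by parts or the contour integral $T_z=\frac1{2\pi i}\int_\Gamma e^{z\lambda}R(\lambda,A)\,d\lambda$. But for a general analytic semigroup of regular contractions there is no such pointwise kernel domination: the resolvent is not positive, and no deformation of $\Gamma$ produces a bound of the form $|T_z f|\le C\,\M(f)$ uniformly in $z\in\Sigma_\t$. The Cowling decomposition you allude to does \emph{not} do this. What it actually gives (and what the paper uses) is
\[
T_{se^{i\varphi}}=\frac1{2\pi}\int_{\real}\widehat\Psi_\varphi(u)\,s^{iu}A^{iu}\,du+\frac1s\int_0^s T_t\,dt,
\]
so that
\[
\TT_\t(f)\lesssim\int_{\real}e^{(\t-\pi/2)|u|}\,|A^{iu}(f)|\,du+\M(f).
\]
The second term is indeed handled by Theorem~\ref{thDS}, but the first term requires bounded imaginary powers with $\|A^{iu}\|\lesssim e^{\s|u|}$ for some $\s<\pi/2$. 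This is precisely the content of Theorem~\ref{CH} (the $H^\infty(\Sigma_\s)$ calculus with $\s<\pi/2$ on $L_p(X;E)$), which your proposal never invokes. Without it the integral over $u$ does not converge and the argument collapses.

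A secondary point: the extension of analyticity from $L_p(X)$ to $L_p(X;E)$ is not a routine Cauchy-formula check as you suggest. Regularity gives contractivity of each $T_t$ on $L_p(X;E)$, but passing the analytic bound $\sup_{z\in\Sigma_\t}\|T_z\|<\infty$ to the $E$-valued space needs the complex-interpolation argument of Lemma~\ref{a-ext} (writing $E$ as an interpolation space between a Hilbert space and another space, then interpolating the analyticity on $L_p(X;\ell_2)$ against the mere contractivity on $L_p(X;E_1)$). This is also where the angle shrinks. So both assertions of the theorem rely on Theorem~\ref{CH}, and your outline is missing that ingredient entirely.
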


This theorem extends  \cite{LMX} to the vector-valued setting. Like in  \cite{LMX}, we can also prove its discrete analogue. Note that the ancestor of the maximal inequality  \eqref{max} are Stein's  maximal ergodic inequality  for symmetric diffusion semigroups,  i.e., Markovian semigroups of positive contractions on $L_p(X)$ for every $1\le p\le\8$ with $T_t$ selfadjoint on $L_2(X)$ (see  \cite[Chapter~III]{S2}).  Cowling \cite{Cow} proved the sectorial maximal inequality for these semigroups.

\begin{cor}\label{corS}
 Let $E$ be a UMD lattice and $\{T_t\}_{t>0}$ be a semigroup of contractions on $L_p(X)$ for all $1\le p\le\8$. Assume that $\{T_t\}_{t>0}$ is strongly continuous on $L_2(X)$. Then \eqref{DS1} holds for any $1<p<\8$.

 If in addition $\{T_t\}_{t>0}$ is analytic on $L_p(X)$ for some $1<p<\8$, then \eqref{max} holds for any $1<p<\8$.
 \end{cor}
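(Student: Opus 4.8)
My plan is to deduce the two assertions from Theorems~\ref{thDS} and~\ref{thS} respectively; the only point that requires genuine work is to show that analyticity on a single $L_{p_0}(X)$ propagates to every $L_p(X)$, $1<p<\8$.

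For the first assertion: a semigroup of contractions on $L_p(X)$ for all $1\le p\le\8$ consists of regular operators, by interpolation between $L_1(X)$ and $L_\8(X)$ as recalled in the Introduction. Strong continuity on $L_p(X)$ for $1<p<\8$ follows from strong continuity on $L_2(X)$ together with contractivity: if $f$ is a simple function supported on a set of finite measure, then $f\in L_q(X)$ for every $q$, and the interpolation inequality for $L_p$-norms gives $\|T_tf-f\|_p\le\|T_tf-f\|_2^{\l}\,\|T_tf-f\|_q^{1-\l}\le\big(2\|f\|_q\big)^{1-\l}\,\|T_tf-f\|_2^{\l}$ with $q=\8$, $\l=2/p$ when $p\ge2$, and $q=1$, $\l=2-2/p$ when $p<2$; hence $\|T_tf-f\|_p\to0$ as $t\to0^+$. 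Since such $f$ are dense in $L_p(X)$ and $\sup_t\|T_t\|_{L_p\to L_p}\le1$, the semigroup is strongly continuous on $L_p(X)$, and Theorem~\ref{thDS} yields \eqref{DS1}.

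For the second assertion, by Theorem~\ref{thS} it suffices to prove that if $(T_t)$ is moreover bounded analytic on $L_{p_0}(X)$ for some $p_0\in(1,\8)$, then it is bounded analytic on $L_p(X)$ for every $p\in(1,\8)$. I would establish this by Stein's interpolation theorem on a strip. Fix $\d>0$ such that $z\mapsto T_z$ is analytic and bounded from $\Sigma_\d$ into $\B(L_{p_0}(X))$, and set $C=\sup_{z\in\Sigma_\d}\|T_z\|_{L_{p_0}\to L_{p_0}}$. For $r>0$ and $\a$ with $|\a|<\d$, the map $w\mapsto re^{i\a(1-w)}$ sends the closed strip $\{0\le{\rm Re}\,w\le1\}$ into $\Sigma_\d$, so $G(w)=T_{re^{i\a(1-w)}}$ is an analytic, uniformly bounded (by $C$) $\B(L_{p_0}(X))$-valued function on that strip. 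On the edge ${\rm Re}\,w=0$ one has $\|G(w)\|_{L_{p_0}\to L_{p_0}}\le C$, while on the edge ${\rm Re}\,w=1$ the operator $G(w)$ equals $T_t$ for some real $t>0$ and is therefore a contraction on both $L_1(X)$ and $L_\8(X)$. Stein interpolation then gives, for each $\theta\in(0,1)$ and each $p$ with $\frac1p=\frac{1-\theta}{p_0}+\frac\theta q$ where $q\in\{1,\8\}$, the bound $\|G(\theta)\|_{L_p\to L_p}\le C^{1-\theta}$; as $\theta$ ranges over $(0,1)$ and $q$ over $\{1,\8\}$, the exponent $p$ takes every value in $(1,\8)\setminus\{p_0\}$, and $p=p_0$ is the hypothesis. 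Since $G(\theta)=T_{re^{i\a(1-\theta)}}$ with $r>0$ and $|\a|<\d$ arbitrary, this shows $\sup_{z\in\Sigma_{\d(1-\theta)}}\|T_z\|_{L_p\to L_p}\le C^{1-\theta}<\8$. Because $z\mapsto T_zf$ is analytic on $\Sigma_\d$ with values in $L_{p_0}(X)$ for $f$ in the dense subspace $L_p(X)\cap L_{p_0}(X)$, a Vitali-type argument upgrades this uniform bound to analyticity of $z\mapsto T_z$ from $\Sigma_{\d(1-\theta)}$ into $\B(L_p(X))$. Together with the strong continuity on $L_p(X)$ established above, $(T_t)$ is an analytic semigroup of regular contractions on $L_p(X)$, and Theorem~\ref{thS} gives \eqref{max}.

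I expect the propagation of analyticity to be the main obstacle; once it is in place, both parts follow at once from Theorems~\ref{thDS} and~\ref{thS}. The remaining ingredients---density of finite-support simple functions in $L_p(X)$, continuity of $z\mapsto T_z$ up to the bounding rays of $\Sigma_\d$ (handled by passing to a slightly smaller angle), and the normal-families argument promoting local uniform boundedness to analyticity---are routine.
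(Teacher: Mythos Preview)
Your approach is correct and essentially mirrors the paper's: regularity follows from contractivity on all $L_p$, after which Theorems~\ref{thDS} and~\ref{thS} apply, the only nontrivial point being the propagation of analyticity from one $L_{p_0}$ to every $L_p$ via Stein interpolation on a strip. The paper dispatches that point by citing \cite[Chapter~3]{S2} and pointing to the proof of Lemma~\ref{a-ext}, which carries out exactly the strip argument you sketch; your extra care with strong continuity on $L_p$ fills in a detail the paper leaves implicit.
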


The first part of the corollary follows immediately from Theorem~\ref{thDS}. On the other hand, the second is a consequence of Theorem~\ref{thS} since it is well known that if $\{T_t\}_{t>0}$ is analytic on $L_p(X)$ for one $1<p<\8$, so is it for all $1<p<\8$. The latter fact is easily proved by complex interpolation (see \cite[Chapter~3]{S2}; see also the proof of Lemma~\ref{a-ext} below). The most important case of the corollary is where every $T_t$ is a selfadjoint  operator on $L_2(X)$. Then $\{T_t\}_{t>0}$ is analytic on $L_2(X)$.

\medskip

The proof of Theorem~\ref{thS} is based on  the following result on $H^\8$ functional calculus. We refer to \cite{KW, L1} for $H^\8$  calculus and to  \cite{bl}  for complex interpolation. 

\begin{thm}\label{CH}
 Let $1<p<\8$ and $\{T_t\}_{t>0}$ be an analytic semigroup of regular contractions on $L_p(X)$. Let $E$ be a UMD lattice on $(\O, \nu)$.  Let $(E_0,\, E_1)$ be an interpolation pair of Banach spaces and $0<\eta<1$. Let $E=(E_0,\, E_1)_\eta$ be the associated complex interpolation space. Assume that $E_0$ is isomorphic to a Hilbert space and $E_1$ is a UMD space.
  \begin{enumerate}[\rm(i)]
 \item The extension of $\{T_t\}_{t>0}$ to $L_p(X;\, E)$ is analytic.
 \item  $A$ has a bounded $H^\8(\Sigma_\s)$ functional calculus for some $\s<\pi/2$, where  $-A$ is the generator of $\{T_t\}_{t>0}$ on $L_p(X;\, E)$.
  \end{enumerate}
 In particular, if $E$ is a UMD Banach lattice, then both assertions  hold.
 \end{thm}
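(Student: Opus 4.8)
The plan is to deduce the result from the two endpoint cases by complex interpolation of $H^\8$ calculus. For $E_0$ a Hilbert space $\H$, the key point is that $L_p(X;L_2(\O))$ — more generally $L_p(X;\H)$ — is a space on which regularity behaves well: since $\{T_t\}$ is a semigroup of regular contractions on $L_p(X)$, it extends to a contraction semigroup on $L_p(X;\H)$, and the analyticity hypothesis is preserved. On such a space one has, by the Coifman–Weiss/Fendler transference machinery together with the known $H^\8$ calculus for analytic semigroups of regular contractions on scalar $L_p$ (this is exactly the input from \cite{LMX} and its functional-calculus companion), a bounded $H^\8(\Sigma_{\s_0})$ calculus for the negative generator $A_0$ on $L_p(X;\H)$, with some angle $\s_0<\pi/2$ independent of the data. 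For $E_1$ a UMD space, Theorem~\ref{thDS} (or rather its proof via transference to the translation group and Fefferman–Stein) combined with the Hieber–Pr\"uss/Weis-type criterion shows that $\{T_t\}$ extends to an analytic semigroup on $L_p(X;E_1)$; and here one invokes the known result that an analytic semigroup of regular contractions on $L_p$ tensored with a UMD space still admits a bounded $H^\8(\Sigma_{\s_1})$ calculus for some $\s_1<\pi/2$. (If the literature only gives this for $R$-analytic semigroups, one notes that regular contractive semigroups are automatically $R$-bounded and hence $R$-analytic.)

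The second step is to interpolate. Write $E=(E_0,E_1)_\eta$, so that $L_p(X;E)=\big(L_p(X;E_0),\,L_p(X;E_1)\big)_\eta$ by the standard complex-interpolation identity for Bochner spaces. Let $A_j$ be the negative generator on $L_p(X;E_j)$ for $j=0,1$, and $A$ the negative generator on $L_p(X;E)$. Since the $T_t$ act compatibly on the interpolation couple, the resolvents $(\l+A_j)^{-1}$ interpolate to $(\l+A)^{-1}$, and more generally $\f(A_j)$ interpolates to $\f(A)$ for rational $\f\in H^\8_0(\Sigma_\s)$. One then uses the interpolation principle for $H^\8$ calculus: if $A_0$ has a bounded $H^\8(\Sigma_{\s_0})$ calculus and $A_1$ has a bounded $H^\8(\Sigma_{\s_1})$ calculus, then $A=A_\eta$ has a bounded $H^\8(\Sigma_\s)$ calculus with $\s=(1-\eta)\s_0+\eta\s_1<\pi/2$, and the bound is controlled by the geometric-mean interpolation of the two endpoint bounds. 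This yields assertion (ii); assertion (i), the analyticity of $\{T_t\}$ on $L_p(X;E)$, follows either by the same interpolation of the bounded analytic extensions on the two endpoint spaces, or simply because a bounded $H^\8$ calculus with angle $<\pi/2$ forces the semigroup to be analytic. The final sentence of the theorem is immediate: a UMD Banach lattice is, by Rubio de Francia's theorem, a complex interpolation space between a Hilbert space (indeed an $L_2$-space) and a UMD Banach lattice, so it is of the required form.

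The main obstacle I expect is the Hilbert-space endpoint: establishing the bounded $H^\8$ calculus for $A_0$ on $L_p(X;\H)$ with an angle strictly less than $\pi/2$. Boundedness of the semigroup and even analyticity on $L_p(X;\H)$ are comparatively soft, but upgrading to a bounded $H^\8$ calculus of angle below $\pi/2$ requires a genuine square-function estimate. The route I would take is the dilation/transference approach: a regular contraction semigroup on $L_p(X)$ that is analytic can, after the Fendler dilation, be compared with the translation group, reducing the square-function bound to a Littlewood–Paley inequality for the Poisson or Gaussian semigroup on $L_p(\real;\H)$, which is classical. Making the angle uniform — i.e. independent of $p$, of the semigroup, and of $\H$ — and checking that all the transference constants are genuinely dimension-free is the delicate bookkeeping step; everything after that (the interpolation of functional calculi and the reduction of the UMD-lattice case to the interpolation hypothesis) is essentially formal.
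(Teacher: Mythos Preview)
There is a genuine gap in your argument, and it sits precisely at the UMD endpoint. You write that ``an analytic semigroup of regular contractions on $L_p$ tensored with a UMD space still admits a bounded $H^\infty(\Sigma_{\sigma_1})$ calculus for some $\sigma_1<\pi/2$'', and you fall back on $R$-analyticity if the literature only gives this for $R$-analytic semigroups. Neither of these is available: whether the extended semigroup on $L_p(X;E_1)$ is analytic, $R$-analytic, or has a bounded $H^\infty$ calculus of angle $<\pi/2$ for a general UMD space $E_1$ are exactly the open questions recorded as Problems~11 and~12 at the end of the paper. If that claim were true, the theorem would be immediate, since $E=(E_0,E_1)_\eta$ with $E_0$ Hilbert and $E_1$ UMD is itself UMD. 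What \emph{is} known on the UMD side is only the Hieber--Pr\"uss transference result: the generator has a bounded $H^\infty(\Sigma_{\sigma_1})$ calculus on $L_p(X;E_1)$ for every $\sigma_1>\pi/2$. Your interpolated angle $(1-\eta)\sigma_0+\eta\sigma_1$ is therefore not automatically below $\pi/2$.

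The paper's actual argument is a two-step interpolation that exploits the freedom to take $\sigma_1$ arbitrarily close to $\pi/2$ from above. First one manufactures the Hilbert endpoint: on $L_p(X;\ell_p)\cong L_p$ the scalar result of \cite{LMX} gives angle $\sigma_0<\pi/2$, while on $L_p(X;\ell_q)$ (any $1<q<\infty$, a UMD space) Hieber--Pr\"uss gives angle $>\pi/2$; interpolating with the right parameter so that $\ell_2=(\ell_p,\ell_q)_\eta$ yields a bounded $H^\infty(\Sigma_\sigma)$ calculus on $L_p(X;\ell_2)$, hence on $L_p(X;\H)$, with some $\sigma<\pi/2$. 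Only then does one interpolate a second time between $L_p(X;E_0)=L_p(X;\H)$ (angle $<\pi/2$) and $L_p(X;E_1)$ (angle $\sigma_1>\pi/2$, taken close enough to $\pi/2$) to land below $\pi/2$ on $L_p(X;E)$. Your proposed direct square-function route to the Hilbert endpoint might be made to work, but it is considerably heavier than this interpolation trick; and in any case the UMD endpoint must be handled with the weaker angle $>\pi/2$, not the one you assume.
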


The above theorem is proved in \cite{Hy} for symmetric diffusion semigroups. We also refer to \cite{BDT, CL, Fa, T} for some works related to the present article.

We will prove Theorem~\ref{thDS} in section~\ref{Proof of Theorem1}, Theorems~\ref{thS} and \ref{CH} in section~\ref{functional calculus}, and conclude the paper with some further results and open problems.


\section{Proof of Theorem~\ref{thDS}}\label{Proof of Theorem1}


We will prove Theorem~\ref{thDS} in this section. This proof is a simple application of the transference principle. The argument consists in transferring  ergodic inequalities like  \eqref{DS1} to the special case where $\{T_t\}_{t>0}$ is the translation group of $\real$. This powerful technique was invented by Calder\'on \cite{Ca} and  largely developed by Coifman and Weiss \cite{CW}. Since then it is commonly called {\it transference principle} and has been widely applied to many different situations.

 Note that if  $\{T_t\}_t$ is the translation group of $\real$, $M(T)$ becomes the one-sided Hardy-Littlewood maximal function. In the latter case, \eqref{DS1} is Bourgain's vector-valued maximal inequality \cite{B} which extends  Fefferman-Stein's  work. In fact, in this case, the lattice $E$ does not need to be a UMD space. Following \cite{GMT}, $E$ is said to have the {\it Hardy-Littlewood property} if inequality~\eqref{DS1} holds for $\{T_t\}_t$ equal to the translation group of $\real$. Thus the transference argument presented below will show that Theorem~\ref{thDS} remains valid if $E$ has the Hardy-Littlewood property.

 To use transference we first need to dilate our semigroup to  a group of isometries.  Fendler's dilation theorem  is at our disposal for this purpose. It insures that  there exist another larger measure space $(\wt X, \wt\F, \wt\mu)$, a strongly continuous group $\{S_t\}_{t\in\real}$ of regular isometries on $L_p(\wt X)$, a positive isometric embedding $D$ from $L_p(X)$ into $L_p(\wt X)$ and a regular projection $P$ from $L_p(\wt X)$ onto $L_p(X)$ such that
 \beq\label{dilation}
 T_t=PS_tD,\quad\forall\; t>0.
 \eeq
This theorem is proved in \cite{Fe} for positive $T_t$ and then extended to regular $T_t$ in \cite{Fe2}.

We are now ready to do our transference argument. It suffices to prove \eqref{DS1} for $\M_a(T)(f)$ in place of $\M(T)(f)$ for any $a>0$, where
  $$\M_a(T)(f)(x,\o)=\sup_{0<t<a}|A_t(T)(f)(x,\o)|.$$
Let $A(T)(f)=\{A_t(T)(f)\}_{t>0}$.  $A(T)(f)$ is viewed as a function of three variables $(x, \o, t)$ on $X\times\O\times(0,\, \8)$. Then we can write
 $$\big\|\M_a(T)(f)\big\|_{L_p(X;\,E)}=\big\|A(T)(f)\big\|_{L_p(X; \,E(L_\8(0,\,a)))}.$$
Here, given a Banach space $B$, $E(B)$ denotes the space of all strongly measurable functions from $\O$ to $B$ such that $\|f(\cdot)\|_B\in E$. Its norm is defined by $\big\|\|f(\cdot)\|_B\big\|_E$.

By regularity, $P, D$ and $S_t$ all extend to the vector-valued case. By \eqref{dilation}, we have
 $$A(T)=PA(S)D.$$
So
 $$\big\|A(T)(f)\big\|_{L_p(X; \,E(L_\8(0,\,a)))}=\big\|P\big(A(S)(D(f)\big)\big\|_{L_p(X;\,E(L_\8(0,\,a)))}.$$
However,  the regularity of $P$ and $D$ implies
  \begin{align*}
  \big\|P\big(A(S)(D(f)\big)\big\|_{L_p(X;\,E(L_\8(0,\,a)))}
 &\le\big\|A(S)(D(f))\big\|_{L_p(\wt X;\,E(L_\8(0,\,a)))},\\
 \big\|D(f)\big\|_{L_p(\wt X;\,E)}
 &\le\big\|f\big\|_{L_p(X;\,E)}.
  \end{align*}
So we are reduced to proving \eqref{DS1} for $S_t$ in place of $T_t$. Thus we can assume that $\{T_t\}$ itself extends to a group of regular isometries on $L_p(X)$ in the rest of the proof. We will simply write $A_t$ for $A_t(T)$. Then for $s>0$, by the regularity of $T_{-s}$ we have
 \begin{align*}
 \big\|A(f)\big\|_{L_p(X; \,E(L_\8(0,\,a)))}
 &=\big\|A(T_{-s}T_s(f))\big\|_{L_p(X;\,E(L_\8(0,\,a)))}\\
 &=\big\|T_{-s}(A(T_s(f))\big\|_{L_p(X;\,E(L_\8(0,\,a)))}\\
 &\le\big\|A(T_s(f))\big\|_{L_p(X;\,E(L_\8(0,\,a)))}.
 \end{align*}
Thus for any $b>0$ we then deduce
 $$\big\|A(f)\big\|_{L_p(X;\,E(L_\8(0,\,a)))}^p
 \le\frac1b\int_0^b\big\|A(T_s(f))\big\|_{L_p(X;\,E(L_\8(0,\,a)))}^pds.$$
Given $(x,\o)\in X\times\O$  define a function $g(\cdot, x, \o)$ on $\real$ by $ g(s, x,\o)=\un_{(0,\,a+b)}(s)T_s(f)(x,\o)$. Then
 $$ A_t(T_s(f))(x,\o)=\frac1t\int_0^tT_{s+u}(f)(x,\o)du
 =\frac1t\int_0^tg(s+u, x,\o)du,\quad 0<t<a,\; 0<s<b.$$
Therefore,
 $$\M_a(T)(T_s(f))(x,\o)\le M^+(g(\cdot, x, \o))(s)\;{\mathop =^{\rm def}}\;\M^+(g)(s, x,\o),$$
where $M^+$ denotes the usual one-sided Hardy-Littlewood maximal function on $\real$:
 $$M^+(h)(s)=\sup_{t>0}\frac1t\int_0^t|h(s+u)|du,\quad s\in\real.$$
Consequently, by \cite{B} (see also \cite{rubio})
 \begin{align*}
 \int_0^b\big\|A(T_s(f))(x,\cdot)\big\|_{E(L_\8(0,\,a))}^pds
 &=\int_0^b\big\|\M_a(T)(T_s(f))(x, \cdot)\big\|_{E}^pds\\
 &\le\int_{\real}\big\|\M^+(g)(s, x, \cdot)\big\|_{E}^pds\\
 &\lesssim  \int_{\real}\big\|g(s, x, \cdot)\big\|_{E}^pds\\
 &=\int_{0}^{a+b}\big\|T_s(f)(x, \cdot)\big\|_{E}^pds.
 \end{align*}
Taking integral over $X$ and using the regularity of $T_s$, we then get
 \begin{align*}
 \int_0^b\int_X\big\|\M_a(T)(T_s(f))(x,\cdot)\big\|_{E}^pd\mu(x)ds
 &\le\int_{0}^{a+b}\int_X\big\|T_s(f)(x, \cdot)\big\|_{E}^pd\mu(x)ds\\
 &\le(a+b)\big\|f\big\|_{L_p(X;\,E)}^p.
  \end{align*}
Combining the preceding inequalities, we finally obtain
 $$\big\|\M_a(T)(f)\big\|_{L_p(X;\,E)}^p
 \lesssim \frac{a+b}b\big\|f\big\|_{L_p(X;\,E)}^p.$$
Letting $b\to\8$ yields the desired inequality. The theorem is thus proved.
 \cqd


\section{$H^\8$ functional calculus}\label{functional calculus}


We will prove Theorems~\ref{thS} and \ref{CH} in this section. Throughout the section we will fix an analytic semigroup $T=\{T_t\}_{t>0}$ of regular contractions on $L_p(X)$ with $1<p<\8$. So  $T: \Si_\t\to\B(L_p(X))$ is an analytic function for some positive angle $\t$ and
 \beq\label{a-bound}
 \sup_{z\in\Si_\t}\big\|T_z\big\|_{\B(L_p(X))}\le C<\8.
 \eeq

Let $H^\8(\Si_\s)$ be the Banach space of all bounded analytic functions on $\Si_\s$ equipped with the uniform norm $\|\,\|_\8$. Recall that a sectorial operator $A$ of type $\t$ on a Banach space $E$ is said to have  a {\it bounded $H^\8(\Si_{\s})$ calculus }with $\s>\t$ if there exists a constant $C$ such that for any $\f\in H^\8(\Si_\s)$, $\f(A)$ is a well defined (unique) operator on $E$ and
 $$\big\|\f(A)\big\|_{\B(E)}\le C\|\f\|_\8.$$

The proof of Theorem~\ref{CH} requires two lemmas.

\begin{lem}\label{a-ext}
Let $(E_0,\, E_1)$ be an interpolation pair of Banach spaces with $E_0$  isomorphic to a Hilbert space. Let $E=(E_0,\, E_1)_\eta$ with $0<\eta<1$. Then the extension of $\{T_t\}_{t>0}$ to $L_p(X;\, E)$ is analytic. In particular, if E is a Banach lattice with nontrivial convexity and concavity, then the conclusion holds.
 \end{lem}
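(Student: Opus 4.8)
The plan is to obtain analyticity on $L_p(X;E)$ by interpolating between the two endpoints, using the classical identity $\big(L_p(X;E_0),\,L_p(X;E_1)\big)_\eta=L_p\big(X;(E_0,E_1)_\eta\big)=L_p(X;E)$ for the complex method (see \cite{bl}). Thus it suffices to establish: (a) $\{T_t\}_{t>0}$ extends to a bounded analytic semigroup on $L_p(X;E_0)$, of some angle $\t_0>0$; and (b) $\{T_t\}_{t>0}$ extends to a bounded (indeed contractive) semigroup on $L_p(X;E_1)$. Part (b) is immediate: a regular contraction extends to a contraction on $L_p(X;B)$ for every Banach space $B$, so $\{T_t\}_{t>0}$ is a contraction semigroup on $L_p(X;E_1)$. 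The last assertion of the lemma then follows from the main one, since a Banach lattice $E$ with nontrivial convexity and concavity is isomorphic to a complex interpolation space $(E_0,E_1)_\eta$ in which $E_0$ is a Hilbertian lattice, a fact from the structure theory of Banach lattices via Calder\'on products (see \cite{LT}).

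Part (a) is the crux, and is where the Hilbertian nature of $E_0$ is genuinely needed. The mere uniform bound $\sup_{z\in\Si_\t}\|T_z\|_{\B(L_p(X))}\le C$ does not transfer to $L_p(X;B)$ for an arbitrary Banach space $B$ — were it so, the UMD hypotheses elsewhere in the paper would be redundant. What is needed instead is that an analytic semigroup of regular contractions on $L_p(X)$ is in fact $R$-analytic: for some $\t'>0$ the family $\{T_z:z\in\Si_{\t'}\}$ is $R$-bounded on $L_p(X)$, equivalently (by Khintchine's inequalities, since we are on an $L_p$-space) $\ell_2$-bounded, equivalently $\{T_t\}_{t>0}$ extends to a bounded analytic semigroup on $L_p(X;\ell_2)$, hence on $L_p(X;E_0)$ for any Hilbert space $E_0\hookrightarrow\ell_2$. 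This $R$-analyticity is the one non-soft ingredient; I would extract it, together with a value of the angle $\t_0$, from the square-function and maximal estimates for analytic operators on $L_p$-spaces developed in \cite{LMX}.

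Granting (a) and (b), here is the interpolation argument. Put $Y_0=L_p(X;E_0)$, $Y_1=L_p(X;E_1)$, $Y=(Y_0,Y_1)_\eta=L_p(X;E)$, and fix $z=re^{i\f}$ with $|\f|<(1-\eta)\t_0$. Set $a=-\f/(1-\eta)$, so $|a|<\t_0$, and consider on the strip $S=\{w\in\com:0\le\Re w\le1\}$ the operator-valued function $H(w)=T_{\,r\exp(ia(w-1))}$. For every $w\in S$ the point $r\exp(ia(w-1))$ lies in $\Si_{\t_0}$, because the imaginary part of $ia(w-1)$ equals $a(\Re w-1)$, of modulus $|a|(1-\Re w)\le|a|<\t_0$; hence, by (a), $w\mapsto H(w)$ is a bounded analytic $\B(Y_0)$-valued function on $S$ with $\|H(w)\|_{\B(Y_0)}\le C_0$. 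On the right edge, $r\exp(ia\cdot it)=re^{-at}>0$ is a positive real number, so by (b) $\|H(1+it)\|_{\B(Y_1)}\le1$; and $H(\eta)=T_{\,r\exp(ia(\eta-1))}=T_{re^{i\f}}=T_z$. Stein's interpolation theorem for analytic families of operators (see \cite{bl}) then gives $\|T_z\|_{\B(Y)}=\|H(\eta)\|_{\B(Y)}\le C_0^{1-\eta}$, uniformly over $z\in\Si_{(1-\eta)\t_0}$. Since moreover $z\mapsto T_z$ is analytic as a $\B(Y_0)$-valued map and $Y$ embeds in $Y_0+Y_1$, this uniform bound upgrades, by a routine density argument, to analyticity of $\{T_t\}_{t>0}$ on $Y=L_p(X;E)$, with angle $(1-\eta)\t_0$.

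The main obstacle is Part (a). Everything else — Part (b), the reduction of the lattice case, and the interpolation passage just described — is formal; but $R$-analyticity (equivalently, $L_p(\ell_2)$-boundedness of $\{T_z\}$ on a subsector) is a genuine theorem about analytic operators on $L_p$, not a consequence of the abstract semigroup axioms, and it is precisely what forces $E_0$ to be Hilbertian rather than merely UMD.
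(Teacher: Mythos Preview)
Your proposal is correct, and the final Stein-interpolation step (between the Hilbert endpoint $L_p(X;E_0)$ where the semigroup is analytic and the arbitrary endpoint $L_p(X;E_1)$ where it is merely contractive by regularity) is exactly the mechanism the paper uses as well. The difference lies entirely in how Part~(a) --- analyticity on $L_p(X;H)$ for a Hilbert space $H$ --- is obtained.

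You import this as a black box: $R$-analyticity of $\{T_t\}$ on $L_p(X)$, which on an $L_p$-space is the same as analyticity on $L_p(X;\ell_2)$, and which indeed follows from the bounded $H^\infty(\Sigma_\sigma)$ calculus with $\sigma<\pi/2$ established in \cite{LMX} combined with the standard implication $H^\infty\Rightarrow R$-sectorial (e.g.\ \cite[Theorem~12.8]{KW} or \cite{KaW}). The paper instead derives the same conclusion by an elementary bootstrap that uses nothing beyond regularity and the scalar analyticity bound \eqref{a-bound}: one starts from $E=\ell_p$, where $L_p(X;\ell_p)\cong L_p$ and analyticity is immediate; then one runs \emph{the very same} Stein interpolation you describe, but with the pair $(\ell_\infty,\ell_p)$ (resp.\ $(\ell_1,\ell_p)$), using only the regularity bound $\sup_{t>0}\|T_t\|_{\B(L_p(X;\ell_\infty))}\le1$ on the non-analytic side. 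This already yields analyticity on $L_p(X;\ell_q)$ for every $1<q<\infty$, in particular $q=2$, and from there the passage to general Hilbert $E_0$ and then to $E=(E_0,E_1)_\eta$ proceeds as in your argument.

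What each approach buys: the paper's route is entirely self-contained and avoids any appeal to $H^\infty$ calculus or $R$-boundedness theory, which is appropriate since this lemma is a \emph{prerequisite} for the $H^\infty$-calculus Theorem~\ref{CH}; your route is shorter to state but leans on the substantially deeper machinery of \cite{LMX} and \cite{KW}, and you should be explicit that the $R$-analyticity is not literally stated in \cite{LMX} but follows from its $H^\infty$ result via \cite{KW} or \cite{KaW}. For the final lattice assertion, the precise reference is Pisier \cite{pis} rather than \cite{LT}.
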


\begin{proof}
First observe that the assertion  is obvious  for $E=\el_p$. In this case  \eqref{a-bound} gives
 $$\sup_{z\in\Si_\t}\big\|T_z\big\|_{\B(L_p(X;\,\el_p))}\le C.$$
We will then show the assertion  for $E=\el_q$ with any $1<q<\8$ by interpolation. Assume that $p<q$ for the moment. Then $\el_q=(\el_\8,\,\el_p)_{p/q}$. Interpolating the above inequality with the following
 $$\sup_{t>0}\big\|T_t\big\|_{\B(L_p(X;\,\el_\8))}\le 1,$$
we deduce
 $$\sup_{z\in\Si_{\t p/q}}\big\|T_z\big\|_{\B(L_p(X;\,\el_q))}\le C^{p/q}.$$
Thus $T: \Si_{\t p/q}\to\B(L_p(X;\,\el_q))$ is a bounded analytic function. As this interpolation argument is used several times in the sequel, we give the details for the reader's convenience. The change of variables $z=e^{i\zeta}$ maps the sector $\Si_\t$ to the vertical trip $S_\t=\{\zeta=u+iv: |u|<\t\}$. Now fix a point $\zeta_0=u_0+iv_0\in S_{\t p/q}$ with $u_0\neq0$. Choose $u_1$ such that $u_0=u_1p/q$ and $|u_1|<\t$.  Let $f$ be in the open unit ball of  $L_p(X;\,\el_q)$. Since
 $$L_p(X;\,\el_q)=\big(L_p(X;\,\el_\8),\,L_p(X;\,\el_p)\big)_{p/q},$$
there exists a continuous function $\f$ on the closed strip $\{\zeta=u+iv: 0\le u\le1\}$, analytic in the interior such that $\f(p/q)=f$ and
 $$\sup_{v\in\real}\big\|\f(iv)\big\|_{L_p(X;\,\el_\8)}\le1,\quad \sup_{v\in\real}\big\|\f(1+iv)\big\|_{L_p(X;\,\el_p)}\le1.$$
Now define another analytic function $\psi$ by
 $$\psi(\zeta)=T_{e^{iu_1\zeta-v_0}}(\f(\zeta)),\quad \zeta=u+iv,\; 0\le u\le 1.$$
Then
 $$\sup_{v\in\real}\big\|\psi(iv)\big\|_{L_p(X;\,\el_\8)}\le1,\quad \sup_{v\in\real}\big\|\psi(1+iv)\big\|_{L_p(X;\,\el_p)}\le C.$$
Since $\psi(p/q)=T_{e^{i\zeta_0}}(f)$, we then deduce
 $$T_{e^{i\zeta_0}}(f)\in  L_p(X;\,\el_q)\;\textrm{ and }\; \big\| T_{e^{i\zeta_0}}(f)\big\|_{L_p(X;\,\el_q)}\le C^{p/q}.$$
Taking the supremum over $f$ in the unit ball of $L_p(X;\,\el_q)$ yields
 $$\big\| T_{e^{i\zeta_0}}\big\|_{\B(L_p(X;\,\el_q))}\le C^{p/q},$$
which is the desired inequality.

The same argument applies to the case $q<p$ with $\el_\8$ replaced by $\el_1$.

In particular, our assertion holds for $E=\el_2$, so for any Hilbert space $H$ too. Now if $E=(E_0,\, E_1)_\eta$ with $E_0$ isomorphic to a Hilbert space, then the above interpolation argument yields the analyticity of $\{T_t\}_{t>0}$ on $L_p(X;\, E)$.

The last part of the lemma follows from Pisier's theorem \cite{pis} which asserts that every Banach lattice $E$ with nontrivial convexity and concavity is isomorphic to a complex interpolation space between $L_2$ and another Banach lattice.
 \end{proof}

The following elementary lemma is taken from \cite{KKW}.

\begin{lem}\label{stability HC}
 Let $(E_0,\, E_1)$ be an interpolation pair of Banach spaces and $E=(E_0,\, E_1)_\eta$ with $0<\eta<1$.  Assume that a sectorial operator $A$ has a bounded $H^\8(\Si_{\s_j})$ calculus on $E_j$ for $j=0, 1$. Then $A$ has a bounded $H^\8(\Si_{\s})$ calculus on $E$ for any $\s>\s_\eta=(1-\eta)\s_0+\eta\s_1$.
 \end{lem}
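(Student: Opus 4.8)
The plan is to prove this by the standard complex interpolation of operators argument, treating the functional calculus map $\f \mapsto \f(A)$ as a family of operators parametrized by the functional $\f$ and interpolating in the ambient space $E_j$, while simultaneously using Stein-type analytic interpolation in the angle $\s_j$. First I would fix an angle $\s > \s_\eta$ and pick $\s_0', \s_1'$ with $\s_0' > \s_0$, $\s_1' > \s_1$ and $(1-\eta)\s_0' + \eta\s_1' < \s$, which is possible since the inequality $\s > \s_\eta$ is strict; then $A$ still has bounded $H^\8(\Si_{\s_j'})$ calculus on $E_j$ (a slightly larger sector only makes the hypothesis weaker). It suffices to prove the estimate $\|\f(A)\|_{\B(E)} \le C\|\f\|_{H^\8(\Si_\s)}$ for $\f$ in the dense subclass of functions that decay polynomially at $0$ and $\8$ on $\Si_\s$ (so that $\f(A)$ is given by an absolutely convergent Cauchy integral and all manipulations are legitimate), and then pass to the limit by the usual approximation / Montel argument built into the definition of a bounded $H^\8$ calculus.

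The core step is the following interpolation. Given such a $\f \in H^\8(\Si_\s)$ with $\|\f\|_\8 \le 1$, define for $\zeta$ in the closed strip $\{0 \le \mathrm{Re}\,\zeta \le 1\}$ the function
\[
\f_\zeta(z) = \f\big(z^{\,\a(\zeta)}\big)\cdot z^{\,\e\zeta - \e/2},
\]
where $\a(\zeta)$ is an affine function of $\zeta$ chosen so that $z \mapsto z^{\a(\zeta)}$ maps $\Si_{\s_j'}$ into $\Si_\s$ when $\mathrm{Re}\,\zeta = j$ (concretely $\a(\zeta) = \s/\s_0'$ on the left edge, $\a(\zeta) = \s/\s_1'$ on the right edge, interpolated affinely, with a small imaginary adjustment to keep $|\a(\zeta)|$ controlled), and the extra factor $z^{\e\zeta-\e/2}$ with small $\e>0$ is a convergence regulariser in the vertical direction. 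One checks that $\zeta \mapsto \f_\zeta(A)$ is analytic and bounded on the strip with values in $\B(E_0)+\B(E_1)$ (using the Cauchy-integral representation and analyticity of $\zeta \mapsto z^{\a(\zeta)}$), that on the edge $\mathrm{Re}\,\zeta = j$ we have $\f_\zeta \in H^\8(\Si_{\s_j'})$ with norm $\les \|\f\|_\8 = 1$, hence $\|\f_\zeta(A)\|_{\B(E_j)} \le C_j$ by hypothesis, and that $\f_\eta(A)$ agrees with $\f(A)$ up to the harmless unimodular-on-the-relevant-scale factor coming from the regulariser, which one removes by letting $\e \to 0$. Then Stein's interpolation theorem for an analytic family of operators gives $\|\f_\eta(A)\|_{\B(E)} \le C_0^{1-\eta}C_1^\eta$, and letting $\e\to0$ yields $\|\f(A)\|_{\B(E)} \les \|\f\|_\8$.

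The main obstacle is bookkeeping: making sure the composition $z \mapsto \f(z^{\a(\zeta)})$ genuinely lands inside the sector of holomorphy $\Si_\s$ for all $\zeta$ in the strip (not just the edges), which forces one to control the imaginary part of $\a(\zeta)$ and explains why the strict inequality $\s > \s_\eta$ is needed rather than $\s \ge \s_\eta$; and verifying that $\zeta \mapsto \f_\zeta(A)$ is admissible for Stein interpolation, i.e. that it is continuous and bounded on the closed strip in the sum space $\B(E_0)+\B(E_1)$ with the right growth — this is where the decay of $\f$ and the convergence factor $z^{\e\zeta-\e/2}$ do their work. Since the statement asserts this is an ``elementary lemma taken from \cite{KKW}'', I would in the write-up simply cite \cite{KKW} for the details and only indicate the above scheme, rather than reproduce the full estimates.
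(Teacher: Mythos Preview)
The paper gives no proof of this lemma at all; it simply records the statement and cites \cite{KKW}. Your closing suggestion --- in the write-up, cite \cite{KKW} and only indicate the scheme --- is therefore exactly what the paper does, and on that level your proposal matches.

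The concrete scheme you sketch, however, has a genuine gap that is not mere bookkeeping. With $\alpha$ affine and $\alpha(0)=\sigma/\sigma_0'\ne\sigma/\sigma_1'=\alpha(1)$ (forced whenever $\sigma_0\ne\sigma_1$), the exponent $\alpha(\zeta)$ has nonzero imaginary part along the entire vertical edges of the strip except at $v=0$. But for $\alpha=a+ib$ with $b\ne0$ one has
\[
\arg\big(z^{\alpha}\big)=a\,\arg z+b\,\log|z|,
\]
which is unbounded as $|z|\to0$ or $|z|\to\infty$. Hence $z\mapsto z^{\alpha(\zeta)}$ maps \emph{no} sector into $\Sigma_\sigma$, and $\f\big(z^{\alpha(\zeta)}\big)$ is simply undefined as a function on $\Sigma_{\sigma_j'}$; there is nothing on the edges to feed into the $H^\infty(\Sigma_{\sigma_j'})$ calculus. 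The regulariser $z^{\e\zeta-\e/2}$ suffers from the same defect (on the edge ${\rm Re}\,\zeta=j$ its exponent is $\e(j-\tfrac12)+i\e v$) and adds spiralling rather than removing it. So the difficulty you flag is not about controlling $|\alpha(\zeta)|$ but about ${\rm Im}\,\alpha(\zeta)\ne0$, and it breaks the family outright. A secondary sign that this is the wrong family: even for real exponents your normalisation $\alpha(j)=\sigma/\sigma_j'$, $\alpha(\eta)=1$ pins $\sigma$ to the \emph{harmonic} mean of the $\sigma_j'$, not the arithmetic mean $\sigma_\eta=(1-\eta)\sigma_0+\eta\sigma_1$ in the statement. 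The Stein-interpolation strategy is indeed the right one, but the analytic family in \cite{KKW} is constructed differently so as to avoid complex powers of $z$; if you want a self-contained argument you must replace your family by that device, otherwise citing \cite{KKW} as the paper does is entirely adequate.
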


\medskip\n{\bf Proof of Theorem~\ref{CH}.} Part (i) is already contained in Lemma~\ref{a-ext}. It remains to prove (ii). To this end, first note that if $E$ is a UMD space, then $A$ has a bounded $H^\8(\Si_{\s_1})$  calculus for any $\s_1>\pi/2$ thanks to \cite{HP} (see also \cite[Corollary~10.15]{KW}).  Note that this result is stated and proved  for positive semigroups in these papers. But the proof requires only the existence of a dilation to groups. The later is insured by Fendler's dilation theorem \cite{Fe2} for regular semigroups too. On the other hand, it is known that $A$ has a bounded $H^\8(\Si_{\s_0})$  calculus for some $\s_0<\pi/2$ on $L_p(X)$, i.e. in the scalar-valued case with $E=\com$ (see \cite[Proposition~2.8]{LMX}). Thus $A$ has a bounded $H^\8(\Si_{\s_0})$  calculus on $L_p(X;\el_p)$ too. Now  choose appropriate $q\in (1,\, \8)$ and $\eta\in (0,\,1)$ such that  $1/2=(1-\eta)/p + \eta/q$. Then by Lemma~\ref{stability HC}, we deduce that $A$ has a bounded $H^\8(\Si_\s)$  calculus for some $\s<\pi/2$ on $L_p(X;\el_2)$, so on  $L_p(X;H)$ for any Hilbert space $H$ too. Finally, a second application of Lemma~\ref{stability HC} finishes the proof of (ii).

If $E$ is a UMD Banach lattice on $\O$, then by \cite{rubio} there exists another UMD lattice $F$ such that $E=(L_2(\O),\, F)_{\eta}$ with $0<\eta<1$, so $E$ satisfies the assumption of the theorem. \cqd

\medskip\n{\bf Proof of Theorem~\ref{thS}.} Using Theorems~\ref{thDS} and \ref{CH}, we can easily adapt the proof of \cite[Theorem~7]{Cow} to the present setting. Let us give the main lines for the reader's convenience. Given $\f\in (-\frac\pi2,\, \frac\pi2)$ define
 $$\Phi_\f(\l)=\exp\big(-e^{{\rm i}\f}\l\big)-\int_0^1e^{-t\l}\,dt,\quad \l>0.$$
Let $\Psi_\f=\Phi_\f\circ\exp$. The Fourier transform of $\Psi_\f$ satisfies the following estimate (see \cite{Cow}):
 $$|\wh\Psi_\f(u)|\les e^{(|\f|-\frac\pi2)|u|},\quad u\in\real.$$
By the Fourier inversion formula, 
 $$ \Phi_\f(\l)=\frac1{2\pi}\int_{\real}\wh\Psi_\f(u) \l^{{\rm i}u}\,du.$$
Now let $z=se^{{\rm i}\f}\in\Si_\t$. We then have
  \begin{align*}
  T_z
  &=e^{-zA}= \Phi_\f(sA)+ \int_0^1e^{-tsA}\,dt\\
  &=\frac1{2\pi}\int_{\real}\wh\Psi_\f(u) s^{{\rm i}u}A^{{\rm i}u}\,du +\frac1s\int_0^sT_t\,dt.
  \end{align*}
It thus follows that
 $$\mathcal T_\t(f)\les \int_{\real}e^{(\t-\frac\pi2)|u|}\,|A^{{\rm i}u}(f)|\,du+\M(f).$$
The second term on the right hand side is estimated by Theorem~\ref{thDS}. For the first we use Theorem~\ref{CH} to conclude that $A$ has bounded imaginary powers of angle $\s<\pi/2$:
  $$\big\|A^{iu}\big\|_{\B(L_p(X;\, E))}\les e^{\s |u|}.$$
Therefore, if  $\t<\frac\pi2-\s$, we get the desired estimate for the first term too. \cqd


\section{More remarks and problems}


The following individual convergence theorem is an easy consequence of Theorem~\ref{thS}. Recall that the fixed point subspace of the semigroup $\{T_t\}_{t>0}$ is 
 $$\F=\{f\in L_p(X;\, E): T_t(f)=f,\; \forall t>0\}.$$
It is well known that $\F$ is complemented in $L_p(X;\, E)$ and the map
 $$P(f)=\lim_{t\to\8}\frac1t\int_0^tT_s(f)ds$$
defines a contractive projection from $L_p(X;\, E)$ onto $\F$ (see \cite[Chapter~VIII]{DS}).

\begin{prop}\label{individual}
 Keep the assumption and notation of Theorem~\ref{thS}. Then for any $f\in L_p(X;\, E)$
 $$\lim_{\Sigma_\t\ni z\to 0}T_z(f)=f\quad\textrm{and}\quad\lim_{\Sigma_\t\ni z\to \8}T_z(f)=P(f)\quad a.e. \textrm{ on } X\times\O.$$
 \end{prop}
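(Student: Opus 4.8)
The plan is to deduce the almost everywhere convergence from the maximal inequality \eqref{max} of Theorem~\ref{thS} by the standard Banach principle: once a maximal operator is bounded on $L_p(X;E)$, almost everywhere convergence on all of $L_p(X;E)$ follows from almost everywhere convergence on a dense subset. So the argument splits into two parts, one for each limit ($z\to0$ and $z\to\8$), and in each part one must (a) identify a convenient dense subclass of $L_p(X;E)$ on which the pointwise limit is easy, and (b) invoke the maximal bound to pass to the whole space.

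For the limit $\Sigma_\t\ni z\to 0$, first I would check it on the dense subspace $\mathcal D=\{A g: g\in D(A)\}+\F_{0}$ (or more simply on the range of the resolvent together with the fixed-point space), where for $f=Ag$ one has the norm-convergent identity $T_z f-f=-\int_0^z T_w A g\,dw$ whose norm is $O(|z|)$ as $z\to0$ in $\Sigma_\t$; convergence everywhere on this subspace is immediate because norm convergence in $L_p(X;E)$ along any sequence gives an a.e.-convergent subsequence, and the limit candidate $f$ is the same for the whole net by analyticity/monotonicity in $|z|$. Combined with the density of $\mathcal D$ (standard for analytic semigroups: $\overline{D(A)}=L_p(X;E)$ and $A(D(A^2))$ is dense) and with the sublinearity of $\TT_\t$, the usual $\e/3$-argument applied to $\limsup_{z\to0}|T_z f - f|\le \limsup|T_z(f-g)| + \limsup|T_z g - g| + |g-f|\le \TT_\t(f-g)+0+|f-g|$ and the weak-type consequence of \eqref{max} forces $\limsup_{z\to0}|T_z f-f|=0$ a.e.

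For the limit $\Sigma_\t\ni z\to\8$, I would split $f=P(f)+(f-P(f))$ where $P$ is the projection onto $\F$ described just before the statement. On $\F$ the claim is trivial since $T_z$ fixes those functions. On the complementary subspace $(\mathrm{Id}-P)(L_p(X;E))$ one again reduces to a dense subclass: it is standard (Mean Ergodic Theorem for analytic semigroups of contractions) that this subspace is the closure of $\{g - T_a g: g\in L_p(X;E),\,a>0\}$, or alternatively the closure of $\{A h: h\in D(A)\}$ restricted to that complement. For $f=g-T_a g$ one computes $T_z f = T_z g - T_{z+a} g$, whose $L_p(X;E)$-norm tends to $0$ as $|z|\to\8$ by analyticity and the uniform bound \eqref{a-bound} on the sector — actually one should be a little careful and use that $\|T_z\|\to 0$ need not hold, but $\|T_z g - T_{z+a}g\|\to 0$ does, e.g. via $T_{z+a}g - T_z g = \int_0^a T_{z+s} Ag\,ds$ when $g\in D(A)$, together with $\|T_{z}Ag\|\to0$ as $z\to\8$ in the sector, the latter being a known decay property of bounded analytic semigroups with dense range of $A$. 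Then the same $\e/3$-argument, now with $\TT_\t(f-g)$ controlling the tail and the dense-subclass term going to $0$, yields $\lim_{z\to\8}T_z f = 0 = P(f)$ a.e. for $f$ in the complement, and adding back $P(f)$ finishes the proof.

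The main obstacle I expect is not the Banach-principle machinery, which is routine, but the two pointwise-on-a-dense-set computations in the sectorial (complex $z$) setting: one must ensure the chosen dense subclasses really are dense in $L_p(X;E)$ for a UMD-lattice-valued $L_p$ space (this needs $\overline{D(A)}=L_p(X;E)$ and the mean ergodic decomposition, both valid here because the semigroup is analytic and bounded, hence the relevant splittings hold in any Banach space), and one must establish the sectorial decay $\|T_z A g\|\to0$ as $\Sigma_\t\ni z\to\8$, which follows from the uniform bound \eqref{a-bound}, analyticity, and a Vitali/normal-families or resolvent-asymptotics argument rather than from any contractivity. Care is also needed to pass from convergence along sequences (which the maximal inequality delivers via subsequences) to convergence of the full net over $\Sigma_\t$; this is handled by the standard monotonicity of $\sup$ over truncated sectors $\Sigma_\t\cap\{|z|<a\}$ used already in the proof of Theorem~\ref{thDS}, together with the fact that $\TT_\t$ itself is the relevant maximal function.
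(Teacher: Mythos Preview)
Your overall scheme matches the paper's: Banach principle via the maximal inequality~\eqref{max}, reducing both limits to a.e.\ convergence on a dense subclass, and for $z\to\8$ the splitting $L_p(X;E)=\mathcal N(A)\oplus\overline{\mathcal R(A)}$ with the dense family $\{g-T_ag\}$ (the paper uses $\{T_{t+s}g-T_sg\}$) in the second summand.

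The gap is in how you verify a.e.\ convergence \emph{on the dense subclass}. You establish only norm convergence ($\|T_zg-g\|=O(|z|)$ for $g\in D(A)$, and $\|T_zg-T_{z+a}g\|\to0$), and then try to pass to a.e.\ convergence by extracting subsequences. That step fails: an a.e.-convergent subsequence says nothing about a.e.\ convergence of the full net over $\Sigma_\t$, and there is no ``monotonicity in $|z|$'' here to repair it. The Banach principle requires genuine pointwise convergence on the dense subset; norm convergence is not enough. (A minor slip: in your $z\to0$ discussion you write $f=Ag$ but the identity you use is for $f=g\in D(A)$.)

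The paper obtains the needed pointwise control directly. For $z\to0$ it uses the Cauchy formula on a small circle around a fixed $s>0$ to get the pointwise bound $|T_z(g)-T_s(g)|\le C|z-s|\,G(x)$ with $G\in L_p(X)$, so $T_z(T_sg)\to T_sg$ a.e.\ as $z\to0$ on the dense span of $\{T_sg\otimes h\}$. For $z\to\8$ it represents $T_zf$, $f=T_{t+s}g-T_sg$, by a contour integral over $\partial\Sigma_\sigma$, and via H\"older plus the $q$-convexity of the UMD lattice $L_p(X;E)$ obtains a \emph{pointwise} majorant $|T_z(f)|\lesssim |z|^{-1/q'}H(x,\o)$ with $H\in L_p(X;E)$, forcing $T_zf\to0$ a.e.

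Your $z\to0$ step is easy to fix along your own lines: for $g\in D(A)$ the identity $T_zg-g=-\int_0^z T_wAg\,dw$ already gives the pointwise bound $|T_zg-g|\le|z|\,\TT_\t(Ag)$, and $\TT_\t(Ag)\in L_p(X;E)$ by~\eqref{max}. For $z\to\8$, however, the norm statement $\|T_zAg\|\to0$ does not yield a pointwise decaying majorant; you would essentially have to reproduce the paper's contour-integral estimate (or some equivalent) to close the argument.
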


\begin{proof}
 Let $g\in L_p(X)$ and $s>0$. Let $\g$ be a circle of center $s$ and radius $r$ with $r<s\sin\t$. For any $z$ inside $\g$ by the Cauchy formula we have
 $$T_z(g)=\frac1{2\pi i}\int_\g \frac{T_\zeta(g)d\zeta}{\zeta-z}.$$
Thus
 $$T_z(g)-T_s(g)=\frac{z-s}{2\pi i}\int_\g \frac{T_\zeta(g)d\zeta}{(\zeta-z)(\zeta-s)}.$$
Consequently,
 $$
 |T_z(g)-T_s(g)|
 \le \frac{|z-s|}{r\pi}\int_\g \frac{|T_\zeta(g)|}{|\zeta-s|}\,d|\zeta|,\quad |z-s|<\frac{r}2. $$
Note that the last integral is a function in $L_p(X)$. Therefore
 $$\lim_{z\to s}T_z(g)=T_s(g)\quad a.e.\textrm{ on } X.$$
It then follows that $\lim_{z\to 0}T_z(T_s(g))=T_s(g)$ a.e..

Let $F$ be the linear span of $\{T_s(g)\otimes h: g\in L_p(X), h\in E, s>0\}$. Then $F$ is dense in $L_p(X; E)$ and by what is proved above $\lim_{z\to 0}T_z(f)=f$ a.e. on $X\times\O$ for any $f\in F$. The assertion then follows from \eqref{max}. Indeed, taking a sequence $(f_n)$ in $F$ such that $f_n\to f$ in $L_p(X; E)$, we have
 $$\limsup_{\Sigma_\t\ni z\to 0}|T_z(f)-f|\le \TT_\t(f-f_n)+|f-f_n|.$$
Thus  by \eqref{max},
 $$\big\|\limsup_{\Sigma_\t\ni z\to 0}|T_z(f)-f|\big\|_{L_p(X; E)}\les \|f-f_n\|_{L_p(X; E)}.$$
Letting $n\to\8$, we deduce that $\limsup_{\Sigma_\t\ni z\to 0}|T_z(f)-f|=0$ a.e..

We turn to the second limit. Let $-A$ be the generator of  $\{T_t\}_{t>0}$. Then $L_p(X;\, E)$ is decomposed into the direct sum of the null and range spaces of $A$: $L_p(X;\, E)=\mathcal N(A)\oplus\overline{\mathcal R(A)}$. Moreover, $\mathcal N(A)$ is the fixed point subspace of the semigroup $\{T_t\}_{t>0}$. Thus it suffices to prove that for any $f\in \overline{\mathcal R(A)}$
 $$\lim_{\Sigma_\t\ni z\to \8}T_z(f)=0\quad a.e. \textrm{ on } X\times\O.$$
Using  \eqref{max} as in the previous part of the proof, we need only to do this for $f$ in a dense subset of $\overline{\mathcal R(A)}$. It is well known that $\{T_{t+s}(g)-T_s(g): s>0, t>0, g\in L_p(X; E)\}$ is such a subset (see \cite[Chapter~VIII]{DS}). So we are reduced to proving the above limit for $f=T_{t+s}(g)-T_s(g)$. To this end, we will use the integral representation of $T_z$. Let $0<\s<\frac\pi2-\t$ and $\d>0$ be sufficiently small. Let $D(0, \d)$ be the disc of center the origin and radius $\d$. Let $\Ga_\d$ be the closed path consisting of the part of the boundary of $\Si_\s$ outside of $D(0, \d)$ and the part of the boundary of $D(0, \d)$  outside of $\Si_\s$. Then
 $$T_z=\frac1{2\pi {\rm i}}\int_{\Ga_\d}e^{-z\l}R(\l, A)d\l,\quad z\in\Si_\t,$$
where $R(\l, A)=(\l-A)^{-1}$. Thus for $f=T_{t+s}(g)-T_s(g)$ as above, we have
 $$
 T_z(f)=\frac1{2\pi {\rm i}}\int_{\Ga_\d}\big(e^{-(z+t+s)\l}-e^{-(z+s)\l}\big)R(\l, A)(g)d\l.
 $$
Let $\Ga$ be the boundary of $\Si_\s$, By the sectoriality of $A$,  $\|\l R(\l, A)\|$ is bounded on $\Ga$. So the above integral is absolutely convergent on $\Ga$. Thus letting $\d\to0$ we deduce
 $$
 T_z(f)=\frac1{2\pi {\rm i}}\int_{\Ga}\big(e^{-(z+t+s)\l}-e^{-(z+s)\l}\big)R(\l, A)(g)d\l.
 $$
Hence for $1<q<\8$ with conjugate index $q'$ we have
 \begin{align*}
 |T_z(f)|
 &\le\frac{t}{2\pi}\int_{\Ga}|e^{-(z+t+s)\l}|\,|\l R(\l, A)(g)|\,|d\l|\\
 &\le\frac{t}{2\pi}\Big(\int_{\Ga}|e^{-q'z\l}|\,|d\l|\Big)^{1/q'}\,
 \Big(\int_{\Ga}|e^{-q(t+s)\l}|\,|\l R(\l, A)(g)|^q\,|d\l|\Big)^{1/q}\\
 &\les\frac1{|z|^{1/q'}}\,\Big(\int_{\Ga}|e^{-q(t+s)\l}|\,|\l R(\l, A)(g)|^q\,|d\l|\Big)^{1/q}.
 \end{align*}
Since the lattice $L_p(X;\, E)$ has the UMD property, it is $q$-convex for some $1<q<\8$.  Then for such a choice of $q$, the last integral represents a function in $L_p(X;\, E)$. Therefore,
 $$\lim_{\Sigma_\t\ni z\to \8}T_z(f)=0\quad a.e. \textrm{ on } X\times\O,$$
 as desired. \end{proof}

Similarly, we have the following individual ergodic theorem corresponding to Theorem~\ref{thDS}.

\begin{prop}
 Under the assumption of Theorem~\ref{thDS}, we have
 $$\lim_{t\to0}A_t(T)(f)=f\quad\textrm{and}\quad\lim_{t\to\8}A_t(T)(f)=P(f)\quad a.e. \textrm{ on } X\times\O$$
for any $f\in L_p(X; E)$.
 \end{prop}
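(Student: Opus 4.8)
The plan is to follow the scheme of the proof of Proposition~\ref{individual}: the maximal inequality \eqref{DS1} of Theorem~\ref{thDS} reduces both a.e.\ statements to a dense subspace of $L_p(X;E)$, on which they are inherited from the classical scalar ergodic theory. Write \eqref{DS1} as $\|\M(f)\|_{L_p(X;E)}\les\|f\|_{L_p(X;E)}$, where $\M(f)=\sup_{t>0}|A_t(T)(f)|$, and recall that $P$ is a contraction on $L_p(X;E)$. Then, exactly as in the last paragraph of the proof of Proposition~\ref{individual}, it suffices to exhibit a dense subspace $F\subset L_p(X;E)$ such that $A_t(T)(f)\to f$ a.e.\ as $t\to0$ and $A_t(T)(f)\to P(f)$ a.e.\ as $t\to\8$ for every $f\in F$: for arbitrary $f$ and $f_n\in F$ with $f_n\to f$ in $L_p(X;E)$ one has the pointwise estimates $\limsup_{t\to0}|A_t(T)(f)-f|\le\M(f-f_n)+|f-f_n|$ and $\limsup_{t\to\8}|A_t(T)(f)-P(f)|\le\M(f-f_n)+|P(f-f_n)|$, and taking $L_p(X;E)$-norms and letting $n\to\8$ forces both left-hand sides to vanish a.e.

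As the dense subspace I would take the linear span of the simple tensors $g\otimes h$, $g\in L_p(X)$, $h\in E$, and use that $A_t(T)(g\otimes h)=(A_t(T)g)\otimes h$ since the $T_t$ act only in the $X$-variable. For the limit $t\to\8$ this reduces matters to the \emph{scalar} individual ergodic theorem: for a strongly continuous semigroup of regular contractions on $L_p(X)$, $1<p<\8$, and any $g\in L_p(X)$, the averages $A_t(T)g$ converge a.e.\ on $X$ to $P_0g$, where $P_0$ is the contractive projection onto the fixed-point subspace of $\{T_t\}$ in $L_p(X)$ (the convergence in $L_p(X)$ being the mean ergodic theorem already quoted, since $L_p(X)$ is reflexive). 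Then $A_t(T)(g\otimes h)\to(P_0g)\otimes h$ a.e.\ on $X\times\O$, and letting $t\to\8$ in $L_p(X;E)$ gives $P(g\otimes h)=(P_0g)\otimes h$; thus the $t\to\8$ convergence holds on $F$. For regular, rather than positive, contractions the scalar a.e.\ ergodic theorem is obtained by dominating $\{T_t\}$ by a positive contraction semigroup — such a domination is furnished by Fendler's dilation to a group of regular isometries — and applying Akcoglu's ergodic theorem.

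For the limit $t\to0$ one may either quote the scalar local ergodic theorem, or argue directly on the dense subspace spanned by the $g\otimes h$ with $g\in D(A)$, where $-A$ generates $\{T_t\}$ on $L_p(X)$: for such $g$, $T_ug-g=-\int_0^uT_v(Ag)\,dv$ in $L_p(X)$, whence a.e.\ on $X$ one has $|A_t(T)g-g|\le\int_0^t|T_v(Ag)|\,dv$, and since Minkowski's integral inequality puts $x\mapsto\int_0^1|T_v(Ag)(x)|\,dv$ in $L_p(X)$, this function is a.e.\ finite, so $\int_0^t|T_v(Ag)(x)|\,dv\to0$ a.e.\ as $t\to0$. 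As $D(A)$ is dense in $L_p(X)$ this again yields a dense subspace, and the first paragraph concludes. The only ingredient that is not routine is the scalar individual ergodic theorem invoked for $t\to\8$; its proof (Akcoglu's dilation of a positive $L_p$-contraction to an invertible positive isometry, the Lamperti description of isometries of $L_p$, and the ergodic theorem on the resulting system) lies outside the present paper and must be cited. The passage from the scalar to the $E$-valued statement is the same maximal-inequality/density mechanism as in Proposition~\ref{individual}; the one compatibility to record is $P(g\otimes h)=(P_0g)\otimes h$, which is immediate from the $L_p$-convergence of the scalar averages.
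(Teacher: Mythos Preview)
Your argument is correct and the density-plus-maximal-inequality skeleton is the same as the paper's, but the way you handle the dense subspace is genuinely different. You reduce to the \emph{scalar} individual ergodic theorem by working on simple tensors $g\otimes h$ and using $A_t(T)(g\otimes h)=(A_t(T)g)\otimes h$; the paper instead stays in the vector-valued setting throughout. For $t\to0$ the paper takes $f=T_s(g)$ with $g\in L_p(X;E)$, writes $A_t(f)-f=-\tfrac1t\int_0^t(t-u)T_u(A(f))\,du$, and bounds this by $t\,\M(|A(f)|)$, which is in $L_p(X;E)$ by Theorem~\ref{thDS}. For $t\to\infty$ the paper works on $f=T_s(g)-g$, computes $A_t(f)=\tfrac1t\int_t^{t+s}T_u(g)\,du-\tfrac1t\int_0^sT_u(g)\,du$, and controls the delicate first term by H\"older and the $q$-convexity of the UMD lattice $L_p(X;E)$, showing that $\big(\int_1^\infty u^{-\alpha q}|T_u(g)|^q\,du\big)^{1/q}\in L_p(X;E)$. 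Your tensor reduction avoids this $q$-convexity step entirely at the cost of importing the scalar pointwise ergodic theorem; the paper's route is more self-contained but uses a nontrivial geometric property of $E$.

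One remark: your justification of the scalar result (``dominating $\{T_t\}$ by a positive contraction semigroup --- such a domination is furnished by Fendler's dilation'') is not quite right. Fendler's theorem gives a \emph{dilation} $T_t=PS_tD$ to a group of regular isometries, not a dominating positive semigroup, and the moduli $|T_t|$ need not form a semigroup. The cleanest way to get the scalar a.e.\ ergodic theorem here is simply to quote the scalar case of Theorem~\ref{thDS} for the maximal inequality, the mean ergodic theorem on the reflexive space $L_p(X)$, and then run the same density argument on $\mathcal N(A_0)\oplus\{T_s g-g\}$ --- exactly what the paper does in the vector-valued case, but one level down. With that adjustment your proof is complete.
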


 \begin{proof}
  Without loss of generality, we can assume that $T_t$ is positive for all $t$. Let $A_t=A_t(T)$. By virtue of the maximal inequality \eqref{DS1}, it suffices to show the first limit for $f$ in the domain of $A$, for instance, for $f=T_s(g)$ with $s>0$ and $g\in L_p(X; E)$. We can further assume that $f\ge0$. Then
  $$A_t(f)-f=-\frac1t\int_0^t(t-u)T_u(A(f))du.$$
Thus
 $$|A_t(f)-f|\le\int_0^tT_u(A(f))du\le t\,\M(A(f)).$$
Since $\M(A(f))\in L_p(X; E)$, we deduce that $\lim_{t\to0}A_t(T)(f)=f$ a.e..

As in the proof of the previous proposition, we need only to show the second limit for $f=T_s(g)-g$ with $s>0$ and $g\in L_p(X; E)$. For such an $f$ we have
 $$A_t(f)=\frac1t\int_t^{t+s}T_u(g)du-\frac1t\int_0^sT_u(g)du.$$
Hence for $t$ sufficiently large
 $$|A_t(f)|\le2 \int_t^{t+s}\frac1u\, |T_u(g)|\,du+ \frac{s}t\,\M(g).$$
The second term on the right hand side tends to $0$ as $t\to\8$. To treat the first one, we use again the $q$-convexity of $L_p(X; E)$. Choose $\a\in (0,\,1)$ such that $\a q>1$. Let $\b=1-\a$. Then
 \begin{align*}
 \int_t^{t+s}\frac1u\, |T_u(g)|\,du
 &\le \Big(\int_t^{t+s}\frac1{u^{\b q'}} \,du\Big)^{1/q'}\Big(\int_t^{t+s}\frac1{u^{\a q}}\, |T_u(g)|^q\,du\Big)^{1/q}\\
 &\le \frac1{(1-\b q')^{1/q'}}\,\Big((t+s)^{1-\b q'}-t^{1-\b q'}\Big)^{1/q'}
 \Big(\int_1^{\8}\frac1{u^{\a q}}\, |T_u(g)|^q\,du\Big)^{1/q}.
 \end{align*}
 The $q$-convexity of $L_p(X; E)$ implies that the last integral represents a function belonging to $L_p(X; E)$. On the other hand, the factor in the front of this integral tends to $0$ as $t\to0$. We then deduce that  $\lim_{t\to0}A_t(T)(f)=0$ a.e. on $X\times\O$.
 \end{proof}

The following result falls in the line of investigation of the so-called vector-valued Littlewood-Paley-Stein theory, it emerged from \cite{X} and since then has been significantly developed. \cite{Hy, MTX} are two subsequent works directly related to the subject of this paper.

 \begin{prop}\label{individual}
 Under the assumption of Theorem~\ref{thS} we have the following square function inequality
 \beq\label{square}
  \left\|\Big(\int_0^\8t\big|\frac{\partial}{\partial t}T_t(f)dt\big|^2\Big)^{1/2}\right\|_{L_p(X;\, E)}
 \les \big\|f\big\|_{L_p(X;\, E)},\quad f\in L_p(X;\, E).
 \eeq
 \end{prop}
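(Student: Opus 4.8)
The plan is to read \eqref{square} as the square function estimate attached to the bounded $H^\8$ calculus of the generator on $L_p(X;\,E)$ furnished by Theorem~\ref{CH}. Put $\psi(\l)=\l e^{-\l}$; on any sector $\Si_\nu$ with $\nu<\pi/2$ one has $|\psi(\l)|\le|\l|\,e^{-|\l|\cos\nu}$, so $\psi$ belongs to the class $H_0^\8(\Si_\nu)$ of bounded analytic functions on $\Si_\nu$ with $|\psi(\l)|\les\min(|\l|^c,|\l|^{-c})$ for some $c>0$. Let $-A$ denote the generator of $\{T_t\}_{t>0}$ on $Y:=L_p(X;\,E)$, so $T_t=e^{-tA}$ and $\pa_t T_t=-Ae^{-tA}$. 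Then $t\,\pa_t T_t f=-\psi(tA)f$, hence, pointwise on $X\times\O$,
\beq\label{sqid}
\int_0^\8 t\,\big|\pa_t T_t f\big|^2\,dt=\int_0^\8\big|\psi(tA)f\big|^2\,\frac{dt}t.
\eeq
Thus the left-hand side of \eqref{square} is exactly the lattice-valued $\psi$-square function of $f$ in $Y$.

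Next I would record the two structural ingredients. First, by Theorem~\ref{CH}(ii), $A$ has a bounded $H^\8(\Si_\s)$ calculus on $Y$ for some $\s<\pi/2$; hence, by the standard passage from a bounded $H^\8$ calculus to square function estimates (see \cite{KW, L1}), for the above $\psi$ and any $\nu$ with $\s<\nu<\pi/2$ one has the (randomized) square function bound on $Y$, of which only the upper estimate $\les\|f\|_Y$ is needed here. Second, $Y=L_p(X;\,E)$ is a UMD Banach lattice --- a Banach function space over $X\times\O$ with the Fatou property and nontrivial convexity and concavity --- hence of finite cotype, so that the randomized square function is equivalent in $Y$ to the concrete lattice expression $\big(\int_0^\8|\psi(tA)f|^2\frac{dt}t\big)^{1/2}$. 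Combining this with \eqref{sqid} yields \eqref{square}.

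The only point that requires care --- and hence the step I regard as the main obstacle --- is making precise the equivalence between the randomized square function natural in the Kalton--Weis $H^\8$-calculus machinery and the concrete pointwise square function $\big(\int_0^\8 t|\pa_t T_t f|^2\,dt\big)^{1/2}$ appearing in \eqref{square}; this is exactly where the finite cotype of the lattice $L_p(X;\,E)$ enters, and it is also the place where one must select the cleanest available reference. The remaining verifications --- that $\psi(\l)=\l e^{-\l}\in H_0^\8(\Si_\nu)$, the identity \eqref{sqid}, and the invocation of Theorem~\ref{CH} --- are routine. One may also note that $A$ need not be injective: on the fixed-point subspace $\N(A)$ both sides of \eqref{square} vanish, while on $\overline{\R(A)}$ the operator $A$ is injective with dense range, so the standard theory applies.
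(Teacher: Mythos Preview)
Your proposal is correct and follows essentially the same route as the paper: invoke Theorem~\ref{CH} to get a bounded $H^\8(\Si_\s)$ calculus with $\s<\pi/2$, use the general principle that such a calculus yields square function estimates, and specialize to $\f(\l)=\l e^{-\l}$ so that $\f(tA)f=-t\,\pa_tT_t f$. The only difference is packaging: the paper cites the lattice version of the $H^\8$-to-square-function implication directly (\cite{CDMY} together with \cite{3L, L2}, which observe that the proof in \cite{CDMY} goes through verbatim in any Banach lattice of nontrivial concavity), whereas you pass through the randomized square function of \cite{KW, L1} and then use finite cotype of $L_p(X;\,E)$ to identify it with the pointwise lattice square function---this is equivalent, and your remark on the decomposition $\N(A)\oplus\overline{\R(A)}$ is a correct (if not strictly necessary) precaution.
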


\begin{proof}
 It is well known that a bounded $H^\8(\Sigma_\s)$ calculus with $\s<\pi/2$ implies  square function inequalities like \eqref{square}. Let us precise this. Let $\s'$ be another angle such that $\s<\s'<\pi/2$. Let $\f$ be a bounded analytic function on  $\Si_{\s'}$. Then by $H^\8$ calculus $\f(tA)$ is a well-defined bounded operator on $L_p(X;\, E)$ for every $t>0$ and we have the following square function inequality
 \beq\label{square1}
 \left\|\Big(\int_0^\8\big|\f(tA)(f)\big|^2\Big)^{1/2}\frac{dt}{t}\right\|_{L_p(X;\, E)}\les \big\|f\big\|_{L_p(X;\, E)},\quad f\in L_p(X;\, E).
 \eeq
This fundamental result is proved in \cite{CDMY} in the case $E=\com$, i.e., for the space $L_p(X)$ (see Corollary~6.7 there). As observed in \cite{3L} (see Lemma~5.3 there; see also \cite{L2}),  the proof of \cite{CDMY} is valid without change for any Banach lattice with nontrivial concavity in place of $L_p(X)$. The space we are concerned here is $L_p(X;\, E)$ which has both nontrivial concavity and convexity.  Thus \eqref{square1} holds.

Now let $\f$ be the function $\f(z)=ze^{-z}$. Then
 $$\f(tA)(f)=tAe^{-tA}(f)=-t \frac{\partial}{\partial t}T_t(f).$$
So \eqref{square1} becomes \eqref{square} for this special choice of $\f$.
 \end{proof}

We conclude this section with some open problems. The first one concerns the weak type $(1, 1)$ version of inequality~\eqref{DS1} under the assumption of Corollary~\ref{corS}.

\begin{problem}
 Let $E$ be a UMD lattice and $\{T_t\}_{t>0}$ a semigroup of contractions on $L_p(X)$ for every $1\le p\le\8$. Does one have
 $$\big\|\M(f)\big\|_{L_{1,\8}(X;\,E)}\les \big\|f\big\|_{L_1(X;\,E)},\quad f\in L_1(X;\,E)?$$
This problem is open even for $E=\ell_q$ with $1<q<\8$.
 \end{problem}

 On the other hand, it would be  interesting to determine the family of Banach spaces $E$ satisfying (i) (resp. (ii)) of Theorem~\ref{CH}.  It is easy to check that both families are closed under the passage to subspaces and quotient spaces. Pisier \cite{pis2} proved that if $E$ is  of nontrivial type, then  $E$ satisfies (i)  for any symmetric convolution semigroup $\{T_t\}_{t>0}$  on a locally compact abelian group.

\begin{problem}
 Let $1<p<\8$ and $\{T_t\}_{t>0}$ be an analytic semigroup of regular contractions on $L_p(X)$.
 \begin{enumerate}[(i)]
 \item Let $E$ be a Banach space of nontrivial type. Does $\{T_t\}_{t>0}$ extend to an analytic semigroup on $L_p(X;\, E)$?
 \item Let $E$ be a UMD Banach space. Does $\{T_t\}_{t>0}$ extend to an analytic semigroup on $L_p(X;\, E)$?  If Yes,  does its negative generator $A$ have a bounded $H^\8(\Sigma_\s)$ functional calculus on $L_p(X;\, E)$ for some $\s<\pi/2$?
 \end{enumerate}
 \end{problem}

Both parts remain open even in the most important case where $\{T_t\}_{t>0}$ is a  symmetric diffusion semigroup (see \cite[Remark~1.8]{pis2}).  For such a semigroup the extension of $\{T_t\}_{t>0}$ to $L_p(X;\, E)$ is analytic for any $1<p<\8$ and for any UMD space $E$ (in fact, only the superreflexivity of $E$ is required; see \cite[Remark~1.8]{pis2}). However,  even in this special case,  it is still an open problem whether $A$ has a bounded $H^\8(\Sigma_\s)$ functional calculus for some $0<\s<\pi/2$. As already observed at the beginning of the proof of Theorem~\ref{CH}, $A$ has a bounded $H^\8(\Sigma_\s)$ functional calculus for some $\s>\pi/2$. Thus by \cite[Theorem~5.3]{KaW}, the last problem is equivalent to the R-analyticity of $\{T_t\}_{t>0}$ on $L_p(X;\, E)$. Let us record this explicitly here:

\begin{problem}
 Let $\{T_t\}_{t>0}$  be a  symmetric diffusion semigroup.
Does $\{T_t\}_{t>0}$ extend to an R-analytic semigroup on $L_p(X;\, E)$ for every $1<p<\8$ and every UMD space $E$?
 \end{problem}

\medskip\n{\bf Acknowledgements.} We thank Chritian Le Merdy for useful comments, Michael Cowling for pointing to us some related references and the referee for helpful suggestions. This work is partially supported by ANR-2011-BS01-008-01 and NSFC grant No.  11271292.

\bigskip


\end{document}